\newtheorem{theorem}{Theorem}[section]
\newtheorem{lemma}[theorem]{Lemma}
\newtheorem{corollary}[theorem]{Corollary}
\newtheorem{definition}[theorem]{Definition}
\newtheorem{example}[theorem]{Example}
\newtheorem{problem}[theorem]{Problem}
\newcommand{\R}{\mathbb{R}}
\newcommand{\I}{\mathcal{I}}
\newcommand{\F}{\mathbb{F}}
\newcommand{\mat}{$\mathcal{M}=(E,\mathcal{I})\text{ }$}
\def\a{\mathbf a}
\def\H{\mathbf H}
\def\C{\mathbf C}
\def\G{\mathbf G}
\def\mM{\mathbf M}
\def\P{\mathbf P}
\def\R{\mathbb R}
\def\s{\sigma_q}
\def\vs{\mathbf s}
\def\x{\mathbf x}
\def\y{\mathbf y}
\def\z{\mathbf z}
\begin{document}

\title{The Truncated \& Supplemented Pascal Matrix and Applications}
\author[1]{M. Hua\thanks{For correspondence regarding this paper, email mikwa@umich.edu. Support from the National Science 
Foundation under grants: 0901145, 1160720, 1104696 and the American Mathematical Society is gratefully acknowledged.}}
\author[2]{S. B. Damelin\thanks{damelin@umich.edu.}}
\author[1]{J. Sun\thanks{jeffjeff@umich.edu.}}
\author[3]{M. Yu\thanks{ming.yu@anu.edu.au.}}
\affil[1]{Department of Mathematics, University of Michigan.}
\affil[2]{Mathematical Reviews, The American Mathematical Society.}
\affil[3]{Australian National University.}

\date{}

\maketitle

\begin{abstract}
In this paper, we introduce the $k\times n$ (with $k\leq n$) truncated, supplemented Pascal matrix which has the property that any $k$ columns form a linearly independent set.  This property is also present in Reed-Solomon codes; however, Reed-Solomon codes are completely dense, whereas the truncated, supplemented Pascal matrix has multiple zeros.  If the maximal-distance separable code conjecture is correct, then our matrix has the maximal number of columns (with the aformentioned property) that the conjecture allows.  This matrix has applications in coding, network coding, and matroid theory.

\end{abstract}

\section{Introduction}
Finite field linear algebra is an import branch of linear algebra. Instead of using the infinite field $\R$, it uses linearly independent vectors consisting of a finite number of elements, which can be represented by a finite number of bits. It has thus motivated many practical coding techniques, such as Reed-Solomon codes \cite{RS} and linear network coding \cite{li2003linear,ho2006random}. It is also closely related to structural matroid theory \cite{oxley2006matroid} through matroid representability \cite{oxley2006matroid,oxley1996inequivalent,el2010index,yu2014deterministic}.

One of the most important problems in finite field linear algebra is finding the size of the largest set of vectors over a $k$-dimensional finite field such that every subset of $k$ vectors is linearly independent \cite{ball1, ball2}. From a matrix perspective, the problem is described as:
\begin{problem}
	Consider a finite field $\F_q$, where $q=p^h$, for $p$ a prime and $h$ a nonnegative integer. Given a positive integer $k$, what is the largest integer $n$, such that there exists a $k\times n$ matrix $\H$ over $\F_q$, in which every set of $k$ columns is linearly independent?
\end{problem}

Such a matrix, upon its existence, could be the generator matrix of an $[n, k]$ maximum-distance-separable (MDS) code \cite{costello2004error}, which can correct up to $d=n-k$ bits of erasures or $t=d/2$ bits of errors. We will thus refer to $\H $ as an MDS matrix. Its existence also determines the representability of uniform matroids, which we will discuss in detail in Section \ref{sec:matroid}. The maximal value of $n$, according to the MDS conjecture, is $q+1$, unless $q = 2^h$ and $k = 3$ or $k = q - 1$, in which case $n \leq q + 2$. This conjecture has been recently proved for any $q=p$ by Ball \cite{ball1,ball2}. But a complete proof of it remains open. 

Therefore, it is crucial to understand the construction of $k\times(q+1)$ MDS matrices. In coding theory literature, many construction algorithms have been proposed to meet certain coding requirements. However, their computational complexity is not necessarily satisfactory. On one hand, multiplications and additions over large finite field are required in the matrix construction. On the other hand, the resultant MDS matrix may have a low sparsity (or high density), which is measured by the number of zeros in the matrix. For example, Reed-Solomon codes have no zeros in its generator matrix. A low sparsity can be translated into higher encoding and decoding complexity.  It is an open question regarding how these algorithms can be generalized and provide new insights into related fields, such as network coding theory and matroid theory.

In this paper, we investigate the above problems by first proposing in Section \ref{sec:def} a new type of MDS matrix called a \emph{supplemented Pascal matrix}. A supplemented Pascal matrix can be generated by additions and, in particular, without multiplications. It also has guaranteed number of zero entries for high sparsity.  We will prove that a supplemented Pascal matrix is an MDS matrix in Section \ref{sec:proof}. We will then extend our results into a general code construction framework in section~\ref{sec:coding}, and then discuss its applications to network coding theory and matroid theory in sections \ref{sec:NC} and \ref{sec:matroid}, respectively.

\section{Definitions}\label{sec:def}
For clarity we should first label the elements of a finite field. Henceforth, let $p$ be a prime and $h$ be a nonnegative integer.  A finite field $\F_q$ contains $q=p^h$ elements, each represented by a polynomial $g(x)=\sum_{i=0}^{h-1}\beta_ix^i$, whose coefficients are $\{\beta_i\}_{i=0}^{h-1}\in[0,p-1]$. Substituting $x=p$ to a different $g(x)$ will yield a different value between 0 and $q-1$, which is an intuitive index of the corresponding element. Specifically, we define a index function $\s(n)$:
\begin{definition}
For any integer $n\in[0,q-1]$, $\s(n)$ is the element of $\F_q$ whose polynomial coefficients satisfy $\sum_{i=0}^{h-1}{\beta_ip^i}=n$.
\end{definition}
For example, given $q=2^3$, we have $\s(0)=0$, $\s(1)$=1, and $\s(5)=x^2+1$.

Based on $\s(n)$, we define a finite field binomial polynomial $f_m(n)$:
\begin{equation}
f_m(n)= \begin{cases}
1 = [\s(n)]^m,& m=0\\
\prod_{i=1}^{m}\frac{\s(n)-\s(i-1)}{\s(i)},&m>0
\end{cases}
\end{equation}
where $\{m,n\}\in[0,q-1]$ are non-negative integers. Intuitively, $f_m(n)$ is a polynomial of $\s(n)$ of degree $m$.

Based on $f_m(n)$, we introduce the key matrix in this paper, called the \emph{Pascal matrix}:

\begin{definition}
The upper-triangular Pascal matrix $\P_q$ over $\F_q$ is a $q \times q$ matrix with its element $\P_q(m,n) = f_m(n)$:
\begin{equation}
\P_q=\left[
\begin{array}{cccc}
f_0(0)&f_0(1)&\cdots&f_0(q-1)\\
f_1(0)&f_1(1)&\cdots&f_1(q-1)\\
\vdots&\ddots&\ddots&\vdots\\
f_{q-1}(0)&f_{q-1}(1)&\cdots&f_{q-1}(q-1)\\
\end{array}
\right],
\end{equation}
For brevity, we call the upper-triangular Pascal matrix the Pascal matrix.
\end{definition}

Note that the matrix index starts from 0. For example, when $q=2^2=4$, we have:

\begin{example}
$$
	\P_4=\begin{bmatrix}
	1 &1 &1 &1\\
	0 &1 &x &x+1\\
	0 &0 &1 &x+1\\
	0 &0 &0 &1\\
	\end{bmatrix}
$$	
\end{example}

Our considered matrix $\P_q$ is named after Pascal because its entries are binomial coefficients, which is the same as traditional Pascal matrix, except that the field applied is $\F_q$ and $\mathbb Z_{\geq0}$, respectively. Indeed, when $q=p$, $\P_p$ is equal to the traditional Pascal matrix modulo-$p$. For example, when $q=p=5$:
\begin{example}
$$ \P_{5,\mathrm{traditional}}=
	\begin{bmatrix}
	1 &1 &1 &1 &1\\
	0 &1 &2 &3 &4\\
	0 &0 &1 &3 &\textbf{6}\\
	0 &0 &0 &1 &4\\
	0 &0 &0 &0 &1\\
	\end{bmatrix} \mathrm{~~v.s.~~} \P_5=\begin{bmatrix}
	1 &1 &1 &1 &1\\
	0 &1 &2 &3 &4\\
	0 &0 &1 &3 &\textbf{1}\\
	0 &0 &0 &1 &4\\
	0 &0 &0 &0 &1\\
	\end{bmatrix}
$$
\end{example}
Indeed, the construction of the Pascal matrix under $\F_p$ shares the same additive formula as the traditional Pascal matrix. Explicitly, $\P_p(m,n) = \P_p(m-1,m-1) + \P_p(m, n-1)$ for every pair of $\{m, n\}\in[1, q-1]$ (note that addition is Mod-$p$).  This idea appears in section 4.2.

\begin{definition}
The truncated Pascal matrix $\P_{q,k}$ is the Pascal matrix $\P_q$ truncated to the first $k$ rows.
\end{definition}

\begin{example}
$$
	\P_{5,2}=\begin{bmatrix}
	1 &1 &1 &1\\
	0 &1 &2 &3\\
   \end{bmatrix}
$$
\end{example}

\begin{definition}
	A \emph{supplemented Pascal matrix}, denoted by $\H_{q,k}$, is a truncated Pascal matrix $\P_{q,k}$ appended with a column vector $\vs_k$, which has a one in the bottom entry and zeroes everywhere else:

\begin{equation}\label{eq:H_def}
\H_{q,k}=\left[~~~~~~U_{q,k}~~~~~~
\middle|
\begin{matrix}~0~\\
\vdots\\
~0~\\
~1~\end{matrix}\right]
\end{equation}
\end{definition}

\begin{example}
$$
	\H_{5,2}=\begin{bmatrix}
	1 &1 &1 &1 &0\\
	0 &1 &2 &3 &1\\
   \end{bmatrix}
$$	
\end{example}

Our supplemented Pascal matrix has a desirable property, namely:
\begin{theorem}\label{theo:H}
Any $k$ columns of $\H_{q,k}$ are linearly independent.
\end{theorem}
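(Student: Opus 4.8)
\section*{Proof proposal}

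The plan is to prove the equivalent statement that every $k\times k$ submatrix obtained by selecting $k$ columns of $\H_{q,k}$ is invertible. The engine of the argument is the observation that, as a function of the field element $a=\s(n)$, the entry $f_m$ is a polynomial of degree exactly $m$: from the defining product, $f_m(n)=\prod_{i=1}^m \frac{a-\s(i-1)}{\s(i)}$ has leading coefficient $\left(\prod_{i=1}^m \s(i)\right)^{-1}$, which is nonzero because $\s$ is a bijection of $[0,q-1]$ onto $\F_q$ with $\s(0)=0$, so that $\s(i)\neq 0$ for every $1\le i\le m\le k-1\le q-1$. Writing $f_m(a)=\sum_{\ell=0}^m c_{m,\ell}\,a^\ell$ with $c_{m,m}\neq 0$, the block of truncated-Pascal columns then factors as a lower-triangular coefficient matrix times a Vandermonde matrix, and this factorization is what I will exploit in both cases.

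First I would treat the case where the $k$ chosen columns all come from the truncated Pascal block $\P_{q,k}$, say the columns indexed by $n_1,\dots,n_k$. Since $\s$ is injective, the corresponding field elements $a_{n_1},\dots,a_{n_k}$ are pairwise distinct. The resulting $k\times k$ submatrix, whose $(m,j)$-entry is $f_m(a_{n_j})$, equals $C\,V$, where $C=[c_{m,\ell}]_{0\le m,\ell\le k-1}$ is lower-triangular with diagonal entries $c_{m,m}\neq 0$, and $V=[\,a_{n_j}^{\,\ell}\,]_{0\le \ell\le k-1,\,1\le j\le k}$ is Vandermonde. Hence $\det C=\prod_{m=0}^{k-1}c_{m,m}\neq 0$ and $\det V=\prod_{i<j}(a_{n_j}-a_{n_i})\neq 0$, so the submatrix is invertible and the columns are independent.

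Second, I would handle the case where the supplement column $\vs_k=(0,\dots,0,1)^{\mathsf T}$ is among the chosen columns, accompanied by $k-1$ Pascal columns indexed by $n_1,\dots,n_{k-1}$. Here I would compute the determinant by cofactor expansion along the $\vs_k$ column: since its only nonzero entry is the $1$ in row $k-1$, the determinant equals $\pm$ the minor obtained by deleting that last row and the $\vs_k$ column. That minor is the $(k-1)\times(k-1)$ matrix with $(m,j)$-entry $f_m(a_{n_j})$ for $0\le m\le k-2$, which is precisely the Pascal-block situation of the first case with $k$ replaced by $k-1$; the same $C\,V$ factorization shows it is nonzero, and therefore so is the full $k\times k$ determinant.

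The main obstacle is the structural step rather than either determinant computation: one must pin down that each $f_m$ has degree exactly $m$ with an invertible leading coefficient, since this is the conversion that turns the awkward finite-field binomial entries into a genuine Vandermonde determinant. Once the nonvanishing of $\s(i)$ for $1\le i\le k-1$ is secured from the bijectivity of $\s$ together with $\s(0)=0$, everything else is routine linear algebra. A minor point worth recording is that the truncation depth satisfies $k\le q$, which guarantees that all indices $i$ appearing in the leading coefficients lie in the range where $\s(i)\neq 0$.
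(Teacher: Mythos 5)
Your proof is correct, and it reaches the theorem by a genuinely different route than the paper. The paper argues by contradiction at the level of left null vectors: a dependence among $k$ columns of $\P_{q,k}$ would yield a nonzero vector $\a$ with $\a\times\mM=\z$, hence a nonzero polynomial $f'=\sum_{m}\alpha_m f_m$ of degree at most $k-1$ vanishing at the $k$ distinct points $\s(n_0),\dots,\s(n_{k-1})$, which is impossible by root counting; the supplement column is then handled by using $\vs_k$ to eliminate the last row of the other columns, reducing to the Truncation Lemma applied to $\P_{q,k-1}$. You instead compute determinants directly via the factorization $\mM=CV$, with $C$ the lower-triangular change of basis from the monomials to the $f_m$ and $V$ Vandermonde at the distinct nodes $a_{n_j}=\s(n_j)$, and you handle $\vs_k$ by cofactor expansion along its single nonzero entry --- the determinantal shadow of the paper's elimination step, arriving at the same $(k-1)$-row reduction. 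The two arguments are close cousins (nonvanishing of the Vandermonde determinant is equivalent to the root-count bound), but your packaging buys an explicit closed form, $\det C\cdot\prod_{i<j}\bigl(a_{n_j}-a_{n_i}\bigr)$ with $\det C=\prod_{m=0}^{k-1}c_{m,m}$, for every maximal minor of $\H_{q,k}$, which the paper's contradiction argument does not provide. It is also more careful on one point the paper leaves implicit: the paper's assertion that $f'\neq 0$ whenever $\a\neq\z$ silently uses that the $f_m$ are linearly independent as polynomials, i.e., that each $f_m$ has degree \emph{exactly} $m$ with invertible leading coefficient $\bigl(\prod_{i=1}^{m}\s(i)\bigr)^{-1}$; this is precisely the fact you isolate as $c_{m,m}\neq 0$ and justify from the bijectivity of $\s$ together with $\s(0)=0$ and $k\leq q$. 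Conversely, the paper's determinant-free formulation generalizes verbatim to the framework of Section~\ref{sec:coding} (any matrix whose $m$-th row is a degree-$m$ polynomial evaluated at distinct points), though your factorization covers that generalization equally well, so the difference there is stylistic rather than substantive.
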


\section{Proof of Theorem \ref{theo:H}} \label{sec:proof}
We will first prove the following property of $\P_{q,k}$, and then prove that $\H_{q,k}$ preserves this property.

\begin{lemma}[Truncation Lemma]
\label{L1}
Any $k$ columns of $\P_{q,k}$ are linearly independent.
\end{lemma}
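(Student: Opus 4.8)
The plan is to show that every $k\times k$ submatrix of $\P_{q,k}$ obtained by selecting $k$ columns has nonzero determinant, and hence full rank. The crucial observation is the one already flagged in the definitions: $f_m(n)$ is a polynomial in the single field element $\s(n)$ of degree exactly $m$, namely $f_m(n)=P_m(\s(n))$ for a fixed polynomial $P_m$ of degree $m$ whose leading coefficient is $\left(\prod_{i=1}^m \s(i)\right)^{-1}$. This leading coefficient is a nonzero element of $\F_q$ because $\s$ is a bijection of $[0,q-1]$ onto $\F_q$ with $\s(0)=0$, so $\s(i)\neq 0$ for every $i\geq 1$.

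First I would fix an arbitrary choice of $k$ distinct column indices $n_1,\dots,n_k\in[0,q-1]$ and set $x_j=\s(n_j)$. Since $\s$ is injective, $x_1,\dots,x_k$ are $k$ distinct elements of $\F_q$. Writing $M$ for the resulting $k\times k$ submatrix, whose rows are indexed by $m=0,\dots,k-1$, its $(m,j)$ entry is $f_m(n_j)=P_m(x_j)$.

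Next I would factor $M$ through a Vandermonde matrix. Expanding each polynomial in the monomial basis, $P_m(x)=\sum_{\ell=0}^{m} a_{m\ell}x^{\ell}$, yields $M=AV$, where $A=(a_{m\ell})$ is lower triangular with diagonal entries $a_{mm}=\left(\prod_{i=1}^m \s(i)\right)^{-1}\neq 0$ (and $a_{00}=1$), and $V=(x_j^{\ell})$ is the Vandermonde matrix in $x_1,\dots,x_k$. Taking determinants over $\F_q$ gives $\det M=\det A\cdot\det V=\left(\prod_{m=1}^{k-1}\prod_{i=1}^{m}\s(i)\right)^{-1}\prod_{1\le i<j\le k}(x_j-x_i)$. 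The first factor is a nonzero field element, and the Vandermonde determinant is nonzero precisely because the $x_j$ are distinct; hence $\det M\neq 0$, and the chosen $k$ columns are linearly independent.

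Once the polynomial interpretation is in place the rest is essentially book-keeping, so I do not anticipate a serious obstacle. The one point that genuinely requires care is verifying that each $f_m$ is a polynomial in $\s(n)$ of degree exactly $m$ with a unit leading coefficient, i.e.\ that no $\s(i)$ with $1\le i\le k-1$ vanishes; this is exactly where the bijectivity of $\s$ and the normalization $\s(0)=0$ are used. If one prefers to avoid the explicit determinant, the same conclusion follows by a direct Lagrange-type argument: if $\sum_j c_j f_m(n_j)=0$ for all $m<k$, then $\sum_j c_j R(x_j)=0$ for every polynomial $R$ of degree $<k$ (since $f_0,\dots,f_{k-1}$ span that space), and the choice $R(x)=\prod_{j\neq \ell}(x-x_j)$ isolates $c_\ell=0$ for each $\ell$, forcing $c=0$.
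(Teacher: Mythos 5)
Your proof is correct, but it follows a genuinely different route from the paper's. The paper argues by contradiction: if some $k$ columns of $\P_{q,k}$ were dependent, the corresponding $k\times k$ submatrix would have rank below $k$ and hence admit a nonzero left null vector $\a$; the row combination $f'(n)=\sum_{m=0}^{k-1}\alpha_m f_m(n)$ is then a nonzero polynomial of degree at most $k-1$ in $\s(n)$ vanishing at the $k$ distinct points $\s(n_0),\dots,\s(n_{k-1})$, contradicting the root bound for polynomials over a field. You instead compute directly, factoring the submatrix as $M=AV$ with $A$ lower triangular with nonzero diagonal entries $\bigl(\prod_{i=1}^{m}\s(i)\bigr)^{-1}$ and $V$ Vandermonde in the distinct evaluation points $x_j=\s(n_j)$, obtaining a closed-form nonzero determinant with no contradiction needed. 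Both arguments rest on the same key observation -- the $m$-th row is a polynomial of degree exactly $m$ in $\s(n)$ -- and both generalize verbatim to the framework of Section \ref{sec:coding} (any matrix with $n\leq q$ whose $m$-th row is given by a degree-$m$ polynomial). What your version buys is twofold: an explicit formula for the determinant of every maximal minor, which is quantitative information the paper's argument does not produce, and a fully explicit verification that the leading coefficients are units (using $\s(i)\neq 0$ for $i\geq 1$), a point the paper leaves implicit when it asserts that $f'\neq 0$ follows from $\a\neq 0$ -- that inference silently uses the linear independence of $f_0,\dots,f_{k-1}$, i.e., exactly the triangularity your matrix $A$ makes visible. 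Your alternative Lagrange-interpolation argument is essentially the column-side dual of the paper's row-side null-vector argument, so it is the closest of your two routes to the original; either way, the proposal is sound.
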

\begin{proof}
To prove it, we first note that $\P_q$ (and thus $\P_{q,k}$) has two important properties: ($m$ begins at 0.)
\begin{enumerate}
\item All the entries in the $m$-th row are defined by the same polynomial $f_m(n)$, which has a degree of $m$;
\item This polynomial has $m$ roots, which are $\{\s(n)\}_{n=0}^{m-1}$. Consequently, the first $m$ entries of the $m$-th row are all zeros.
\end{enumerate}

Given a truncated Pascal matrix $\P_{q,k}$, our hypothesis (to be disproved) is that there exist $k$ distinctive values of $n$, say $\{n_0,n_1,\cdots,n_{k-1}\}$, such that columns $\{n_0,n_1,\cdots,n_{k-1}\}$ of $\P_{q,k}$ constitute a linearly dependent set. In other words, if our hypothesis is valid, then there exists an $k\times k$ sub-matrix $\mM$ of $\P_{q,k}$:
\begin{equation}
\mathbf M=\left[
\begin{array}{cccc}
f_0(n_0)&f_1(n_1)&\cdots&f_1(n_{k-1})\\
f_1(n_0)&f_2(n_1)&\cdots&f_2(n_{k-1})\\
\vdots&\ddots&\ddots&\vdots\\
f_{k-1}(n_0)&f_{k-1}(n_1)&\cdots&f_{k-1}(n_{k-1})\\
\end{array}
\right],
\end{equation}
whose rank is smaller than $k$.

If this is the case, then there must exist a length-$k$ non-zero vector $\a\in\F_q^k$ such that $\a\times\mM=\z$, where $\z$ is an all-zero vector of length $k$:
$${[\alpha_0,\alpha_2,\cdots,\alpha_{k-1}]}_{\mathbf a}\times \mM=[0,0,\cdots,0]_{\z}$$

Recall that the $m$-th row of $\P_{q,k}$ (and thus $\mM$) is defined by $f_m(n)$. Correspondingly, $\z$  is defined by: $$f'(n)\triangleq\sum_{m=0}^{k-1}\alpha_mf_m(n),$$
where $\mathbf z(k)=f'(n_k)=0$ for all $m\in[0,k-1]$. We also note that the degree of $f'(n)$ is at most $k-1$, because the highest degree of its summands is the degree of $f_{k-1}(n)$ with a value of $k-1$.

Therefore, if our hypothesis is valid, i.e., if columns-$\{n_0,n_1,\cdots,n_{k-1}\}$ of $U_{q,k}$ constitute a linearly dependent set, then we will obtain a polynomial $f'(n)$ such that:
\begin{itemize}
\item Its degree is at most $k-1$;
\item It has $k$ roots, whose values are $\{\s(n_0),\s(n_1),\cdots,\s(n_{k-1})\}$.
\end{itemize}

However, with a degree of at most $k-1$, $f'(n)$ can only have at most $k-1$ roots unless $f'(n)=0$, which is not the case because $\mathbf a$ is non-zero. Hence, $f'(n)$ does not exist, and thus our hypothesis is invalid. Therefore, every $k$ columns of $U_{q,k}$ are linearly independent. \qed(lemma)

Since $\H_{q,k}$ is constructed by appending $\vs_k$ to $\P_{q,k}$, to prove Theorem \ref{theo:H} we only need to prove that any $k-1$ columns of $\P_{q,k}$ and $\vs_k$ together never constitute a linearly dependent set. To see this, we can simply use $\vs_k$ to linearly cancel the first $q$ entries in the last row of $\H_{q,k}$. This will transform $\H_{q,k}$ from (\ref{eq:H_def}) into:

\begin{equation}
\H_{q,k}' = \left[
\begin{array}{ccc|c}
~ & ~        & ~  & 0 \\
~ &\P_{q,k-1}& ~  & 0 \\
~ & ~        & ~  & \vdots \\
0 & \cdots   & 0  & 1
\end{array}
\right]
\end{equation}
which indicates that $\vs_k$ is orthogonal to all the other columns of $\H_{q,k}'$. Then, by applying the truncation lemma to $\P_{q,k-1}$, we know that every $k-1$ out of the first $q$ columns of $\H_{q,k}'$ are linearly independent. Adding $\vs_k$ to them will yield a linearly independent set of $k$. Theorem \ref{theo:H} is thus proved.
\end{proof}
\section{Applications}\label{sec:app}
\subsection{Coding Theory}\label{sec:coding}

The truncation lemma can be immediately generalized to any appropriately defined $k\times n$ matrix that satisfies: 1) $n\leq q$, and 2) the $m$-th ($m\in[0,k-1]$) row is defined by a polynomial of degree $m$. For example, by setting $f_m(n)=\s(n)^{m-1}$, we can obtain a $k\times n$ matrix under $\F_q$ such that every set of $k$ columns is a linearly independent set. Indeed, this matrix is the generator matrix $\G$ of a $(n,k)$ Reed-Solomon code:

\[\begin{bmatrix}
  	\s(1)^0 & \s(2)^0 &\cdots & \s(n)^0\\
  	\s(1) & \s(2) &\cdots & \s(n)\\
  	\s(1)^2 & \s(2)^2 &\cdots & \s(n)^2\\
  	\vdots &\vdots &\ddots &\vdots\\
  	\s(1)^{k-1} & \s(2)^{k-1} &\cdots & \s(n)^{k-1}
  \end{bmatrix}\]

Then by appending $\vs_k$, we can obtain a $[n+1,k]$ Reed-Solomon code. Therefore, our polynomial approach is a general approach of constructing \emph{non-trivial} $[n,k]$ MDS codes. It also indicates that the maximum length of any MDS code is at least $q+1$ for any $k\leqslant q$. This result well resonates the MDS conjecture \cite{ball1,ball2}.

Among all the possible constructions, the supplemented Pascal matrix $\H_{q,k}$ enjoys a high sparsity, which is the number of zeros in the matrix. Higher sparsity is advantageous, because it generally leads to easier encoding/decoding. However, the sparsity has an upper bound. In the following lemma, we will prove that  $\H_{q,k}$  approximates this bound with a factor of $\frac{1}{2}$:

\begin{lemma}[Matrix Sparsity]
The number of zeros in the supplemented Pascal matrix  $\H_{q,k}$ is $\frac{1}{2}$ of the maximum sparsity of any $(n,k)$ code.
\end{lemma}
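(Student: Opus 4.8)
The plan is to compute the number of zeros in $\H_{q,k}$ exactly, to determine the largest number of zeros that any $k\times n$ MDS matrix can possibly carry, and then to compare the two counts. I read \enquote{the maximum sparsity of any $(n,k)$ code} as the maximum number of zero entries over all $k\times n$ MDS matrices $\H$, so the statement is really a ratio between an explicit count and an extremal bound.

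First I would count the zeros of $\H_{q,k}$ directly, reusing the two structural facts established in the proof of the Truncation Lemma. The $m$-th row of $\P_{q,k}$ is governed by the degree-$m$ polynomial $f_m(n)$, whose $m$ roots are exactly $\s(0),\dots,\s(m-1)$; hence in row $m$ precisely the first $m$ entries vanish and every remaining entry is nonzero. Summing over $m=0,\dots,k-1$ gives $\sum_{m=0}^{k-1}m=\binom{k}{2}$ zeros in $\P_{q,k}$, and the appended column $\vs_k$ contributes a further $k-1$ zeros, since all of its entries but the bottom one are zero. Therefore $\H_{q,k}$ contains exactly $\binom{k}{2}+(k-1)=\tfrac{(k-1)(k+2)}{2}$ zeros.

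Next I would pin down the extremal quantity. The key observation is that each row of any $k\times n$ MDS matrix $\H$ is itself a nonzero codeword of the $[n,k]$ MDS code it generates, and that by the Singleton bound, met with equality for MDS codes, every nonzero codeword has Hamming weight at least $d=n-k+1$. Consequently each row carries at most $n-(n-k+1)=k-1$ zeros, so $\H$ has at most $k(k-1)$ zeros in total. This bound is attained, for instance by a systematic generator matrix of the form $[\,I_k \mid A\,]$: the identity block supplies exactly $k(k-1)$ zeros, while the MDS property forces every entry of $A$ to be nonzero. Hence the maximum sparsity of any $(n,k)$ code equals $k(k-1)$. Forming the ratio then gives $\big(\tfrac{k(k-1)}{2}+(k-1)\big)\big/\big(k(k-1)\big)=\tfrac{1}{2}+\tfrac{1}{k}$, which I would record as approximating $\tfrac12$, with the excess $\tfrac1k$ vanishing as $k$ grows; this is the promised factor-$\tfrac12$ approximation.

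The main obstacle is the extremal bound rather than the bookkeeping: one must argue cleanly that every row of an MDS matrix is a codeword of weight at least $n-k+1$, invoke the Singleton bound correctly, and then verify tightness via the systematic form so that the maximum is genuinely $k(k-1)$ and not merely an upper estimate. A secondary point to handle carefully is interpretive, namely making explicit that the stated factor of $\tfrac12$ is the asymptotic value $\lim_{k\to\infty}\big(\tfrac12+\tfrac1k\big)$, consistent with the paper's wording that $\H_{q,k}$ \emph{approximates} the sparsity bound.
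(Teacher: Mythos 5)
Your proof is correct and follows the same skeleton as the paper's: count the zeros of $\H_{q,k}$ row by row, then cap the sparsity of any $k\times n$ MDS matrix at $k-1$ zeros per row, giving $k(k-1)$ in total. But you improve on the paper's proof in three respects. First, your count is the accurate one: the paper tallies only the $\binom{k}{2}$ zeros of the truncated Pascal block and silently drops the $k-1$ zeros of the appended column $\vs_k$, so its claim that $\H_{q,k}$ has exactly $\frac{k^2-k}{2}$ zeros, exactly half the maximum, is slightly off; your total $\frac{(k-1)(k+2)}{2}$ and ratio $\frac12+\frac1k$ are what actually hold, and they match the paper's own earlier prose that $\H_{q,k}$ \emph{approximates} the bound with a factor of $\frac12$. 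Second, you derive the per-row bound from the Singleton bound (each row of a generator matrix is a nonzero codeword, hence has weight at least $n-k+1$), whereas the paper argues that a row containing $k$ zeros would place an all-zero row inside some $k\times k$ submatrix, contradicting full rank; the two arguments are equivalent in substance, yours phrased in coding-theoretic terms, the paper's in elementary linear algebra. Third, you verify tightness: by row-reducing any MDS matrix to systematic form $\left[\,I_k \mid A\,\right]$, where the MDS property forces every entry of $A$ to be nonzero, the value $k(k-1)$ is genuinely attained --- a step the paper omits entirely, even though its statement speaks of \emph{the maximum} sparsity rather than an upper estimate. (Incidentally, the paper's intermediate claim of at most $k^2+k$ zeros in total is a typo for $k^2-k$, as its own conclusion confirms.)
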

\begin{proof}
Since any $k\times k$ sub-matrix of $\G$ has a rank of $k$, there is no all-zero row in this matrix. Hence, there is at most $k-1$ zeros in each row of $\G$, and at most $k^2+k$ zeros in total. Recall that in $\H_{q,k}$ the $m$-th row ($m\in[0,k-1]$) has $m$ zeros. The total number of zeros is $\frac{k^2-k}{2}$, which is half of the maximum.
\end{proof}

\subsection{Network Coding Theory}\label{sec:NC}
Network coding (NC) is a class of packet-based coding techniques. Consider a block of $K\geq 1$ data packets $\{\x_k\}_{k=0}^{K-1}$, each containing $L$ bits of information. NC treats these data packets as $K$ variables, and sends in the $u$-th ($u\in[0,+\infty]$) transmission a linear combination $\y_u$ of all of them:

\begin{equation}
\y_u=\sum_{k=0}^{K-1}\alpha_{k,u}\x_k,
\end{equation}
where coefficients $\{\alpha_k\}_{k=0}^{K-1}$ are elements of a finite field $\F_q$.

Ideally, NC is able to allow any receiver that has received any $K$ coded packets to decode all the $K$ data packets by solving a set of $K$ linear equations. To this end, the associated coefficient matrix $\C$, where

\begin{equation}
\C=\left[
\begin{array}{cccc}
\alpha_{0,0}&\alpha_{0,1}&\cdots&\cdots\\
\alpha_{1,0}&\alpha_{1,1}&\cdots&\cdots\\
\vdots&\vdots&\ddots&\vdots\\
\alpha_{K-1,0}&\alpha_{K-1,1}&\cdots&\cdots\\
\end{array}
\right],
\end{equation}
must satisfy that every set of $K$ columns of it is a linearly independent set. Once this condition is met, NC is able to achieve the optimal throughput in wireless broadcast scenarios \cite{yu2014deterministic}.

However, it is highly non-trivial to meet this condition, which hinders the implementation of NC. First, to guarantee the linear independence, the sender either chooses coefficients randomly from a sufficiently large $\F_q$ \cite{lucani2009random,heide2009network} or regularly collect receiver feedback to make online coding decisions \cite{fragouli2007feedback}. While large $\F_q$  incurs heavy computational loads, collecting feedback could be expensive or even impossible in certain circumstances, such as time-division-duplex satellite communications \cite{lucani2009random}. Second, to enable the decoding, coding coefficients must be attached to each coded packet, which constitute $\lceil K\log_2q \rceil$ bits of overhead in each transmission. When $q$ is large and $L$ is small, the throughput loss due to the overhead may overwhelm all the other benefits of NC.

These practical shortages of NC can be easily overcome by the proposed supplemented Pascal matrix. By choosing a sufficiently large $p$ and let $\C = \H_{p,K}$, we obtain an NC that is both computational friendly (only Mod-$p$ operations) and feedback-free. Moreover, for the receivers to retrieve the coding coefficients, the sender only needs to attach the index $u$ to the $u$-th packet, rather than attaching the complete coefficients. Furthermore, the additive formula for Pascal matrix may enable efficient progressive coding/decoding algorithms, which could be our future research direction.

\subsection{Matroid Theory}\label{sec:matroid}
A matroid \mat is a finite collection of elements called the ground set, $E$, paired with its comprehensive set of independent subsets, $\I$.  A uniform matroid $U_n^k$ has $|E|=n$ and the property that {\bf any} size $k$ subset of $E$ is an element of $\I$ and {\bf no} size $(k+1)$ subset is in $\I$. $U_n^k$ is called $q$-representable if there is a $k\times n$ matrix such that every $k$ columns of it are linearly independent under $\F_q$.  
 
\begin{corollary}[Representability of Uniform Matroid]
Any uniform matroid $U_{n}^k$ that satisfies $n\leqslant q+1$ is $q$-representable by any $n$ columns of $\H_{q,k}$.\qed
\end{corollary}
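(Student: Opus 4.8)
The plan is to observe that the notion of $q$-representability, as defined just above, asks for a $k\times n$ matrix in which every set of $k$ columns is linearly independent over $\F_q$, and that this is precisely the property of $\H_{q,k}$ established in Theorem \ref{theo:H}. The corollary therefore reduces to a column count together with a direct appeal to that theorem. First I would note that $\H_{q,k}$ is a $k\times(q+1)$ matrix: the truncated Pascal block $\P_{q,k}$ supplies $q$ columns and the supplementary vector $\vs_k$ supplies one more. Since the hypothesis gives $n\leqslant q+1$, there are at least $n$ columns available, so any choice of $n$ of them produces a well-defined $k\times n$ submatrix.

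Next I would verify that such a submatrix witnesses $q$-representability. Fix any $n$ columns of $\H_{q,k}$ and form the $k\times n$ matrix $A$ they span. Every $k$-element subset of the columns of $A$ is, by construction, a $k$-element subset of the columns of $\H_{q,k}$, and Theorem \ref{theo:H} guarantees that every such subset is linearly independent. Hence every $k$ columns of $A$ are linearly independent, which is exactly the defining condition for $A$ to represent $U_{n}^k$ over $\F_q$.

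To connect this cleanly with the matroid definition, I would additionally remark that the independent column-sets of $A$ match the independent sets of $U_{n}^k$ on both sides: linear independence of every size-$k$ column set forces independence of every smaller set, since a subset of a linearly independent set is again linearly independent, while any $k+1$ or more columns of a $k$-row matrix are automatically linearly dependent, so no set of size exceeding $k$ is independent. Thus the column matroid of $A$ is exactly $U_{n}^k$, and since the choice of the $n$ columns was arbitrary the statement holds for any $n$ columns.

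The argument carries essentially no obstacle, since all of the mathematical content is already contained in Theorem \ref{theo:H} and the corollary is bookkeeping. The only points deserving a moment's attention are confirming the column count $q+1\geqslant n$, so that the selection of $n$ columns is possible, and noting that the single condition \emph{every $k$ columns independent} simultaneously encodes both halves of the uniform matroid's independence structure once the $k$-row rank bound is taken into account.
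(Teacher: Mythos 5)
Your proposal is correct and takes the same route the paper intends: the corollary is stated with an immediate \qed precisely because it follows directly from Theorem \ref{theo:H} applied to any $n\leqslant q+1$ of the $q+1$ columns of $\H_{q,k}$. Your extra verification that the column matroid is exactly $U_n^k$ (subsets of independent sets are independent; sets of more than $k$ columns of a $k$-row matrix are dependent) is careful bookkeeping the paper leaves implicit, but it is not a different argument.
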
 

The statement: \textit{Any uniform matroid $U_{n}^k$ that satisfies $n\leqslant q+1$ is $q$-representable} is known \cite{oxley2006matroid, ball3, RS}; one can obtain another construction from Reed-Solomon codes.  $\H_{q,k}$ is just another, sparse example.

\section{Conclusion}

In this paper, we proposed the supplemented Pascal matrix, whose first $k$ rows is an MDS matrix under $\F_q$ for any prime power $q$ and positive integer $k\leqslant q$. Our construction can be potentially generalized to a framework that enables low-complexity MDS code constructions and encoding/decoding as well. Our matrix can overcome some practical shortages of network coding and, thus, enables high-performance wireless network coded packet broadcast. Our matrix resonates with existing results on the representability of uniform matroids, while also providing new insights into this topic. In the future, we intend to study Pascal-based network coding algorithms. We are also interested in applying our results to other fields such as projective geometry and graph theory.


\end{document}